\theoremstyle{plain}% Theorem-like structures provided by amsthm.sty
\newtheorem{theorem}{Theorem}[section]
\newtheorem{lemma}[theorem]{Lemma}
\newtheorem{proposition}[theorem]{Proposition}
\theoremstyle{definition}
\newtheorem{definition}[theorem]{Definition}
\newtheorem{example}[theorem]{Example}
\theoremstyle{remark}
\newtheorem{remark}{Remark}
\newtheorem{algorithm}{Algorithm}
\newtheorem*{armijo}{\bf Armijo stepsize}
\newtheorem*{adaptative}{\bf Lipschitz-based adaptive stepsize}
\newtheorem*{diminishing}{\bf Diminishing stepsize}
\def\diam{\operatorname{diam}}
\newcommand{\argmin}{\rm argmin}
\def \T {{\scriptscriptstyle\mathrm{T}}} %Transposto
\def \T {{\scriptscriptstyle\mathrm{T}}} %Transposto
\begin{document}
%%%%%%%%%%%%%%%%%%%%%%%%%%%%%%%%%%%%%%%%%%%%%%%%%%%%%%%%%%%%%
\title{Frank--Wolfe algorithm for star-convex functions }
\author{
R. D\'iaz Mill\'an \thanks{School of Information Technology, Deakin University, Melbourne, Australia, e-mail:\url{r.diazmillan@deakin.edu.au}.}
\and
O.  P. Ferreira  \thanks{IME, Universidade Federal de Goi\'as,  Goi\^ania, Brazil, e-mail:\url{orizon@ufg.br}.}
\and
 J. Ugon  \thanks{School of Information Technology, Deakin University, Melbourne,   Australia,  e-mail:\url{j.ugon@deakin.edu.au}.}}
\maketitle
\begin{abstract}
We study the Frank--Wolfe algorithm for minimizing a differentiable function with Lipschitz continuous gradient over a compact convex set. To extend classical complexity bounds to certain non-convex functions, we focus on the class of \emph{star-convex functions}, which retain essential geometric properties despite the lack of convexity. We establish iteration-complexity bounds of $\mathcal{O}(1/k)$ for both the objective values and the duality gap under star-convexity, using diminishing, Armijo-type, and Lipschitz-based stepsize rules. Notably, the diminishing and Armijo strategies do not require prior knowledge of Lipschitz or curvature constants. These results demonstrate that the Frank--Wolfe method preserves optimal complexity guarantees beyond the convex setting.
\end{abstract}

\noindent
{\bf Keywords:} Frank-Wolfe method;  star-convex functions;  non-convex function.

\medskip
\noindent
{\bf AMS subject classification:}   90C25, 90C60, 90C30, 65K05.
%%%%%%%%%%%%%%%%%%%%%%%%%%%%%%%%%%%
\section{Introduction}

The Frank--Wolfe algorithm, also known as the conditional gradient method, has a long and influential history, beginning with its introduction in the 1950s to solve constrained convex quadratic programs over polyhedral sets~\cite{FrankWolfe1956}. A decade later, it was extended to minimize general convex functions with Lipschitz continuous gradients over compact convex domains~\cite{LevitinPolyak1966}. The method gained renewed interest in recent years due to its simplicity, low memory footprint, and projection-free structure, making it especially suitable for large-scale and high-dimensional problems. Its effectiveness in exploiting problem structure, such as separability and sparsity, has led to a proliferation of variants and theoretical advances (see, for example,~\cite{BeckTeboulle2004,Bouhamidietall2018,BoydRecht2017,FreundMazumder2017, Ghadimi2019,HarchaouiNemirovski2015, Jaggi2013, Konnov2018, LanZhou2016,LussTeboulle2013}).

In this work, we investigate the application of the Frank--Wolfe method to a broad class of non-convex optimization problems of the form
\[
\min_{x \in \mathcal{C}} f(x),
\]
where $\mathcal{C} \subset \mathbb{R}^n$ is a compact convex set and $f:\mathbb{R}^n \to \mathbb{R}$ is a differentiable function with Lipschitz continuous gradient. We are particularly interested in cases where the objective function $f$ is not convex but belongs to a class of functions that extend the notion of convexity while still allowing for global convergence guarantees. Specifically, we focus on the class of \emph{star-convex functions}, a notion introduced in~\cite{NesterovPolyak2006} in the context of second-order methods, which turns out to be especially well-suited for analyzing the Frank--Wolfe algorithm in non-convex settings.

Despite the non-convexity of star-convex functions, we show that the Frank--Wolfe method achieves a convergence rate of $\mathcal{O}(1/k)$ for both the function values and the duality gap, provided that $f$ satisfies star-convexity and has a Lipschitz continuous gradient. This extends prior work which typically guarantees only a rate of $\mathcal{O}(1/\sqrt{k})$ for general non-convex functions~\cite{Lacoste2016}. We propose a version of the Frank--Wolfe algorithm equipped with an adaptive stepsize rule that does not require any estimate of the Lipschitz constant. This rule is inspired by techniques developed in~\cite{Beck2015} (see also~\cite{BeckTeboulle2009, PedregosaJaggi2020}). Unlike previous analyses that rely on curvature or global smoothness bounds, our method adaptively estimates local descent parameters using only function and gradient evaluations. As a result, it remains efficient and broadly applicable, especially in large-scale settings.

The remainder of this paper is organized as follows. Section~\ref{sec:Preliminares} introduces the necessary background and notation. In Section~\ref{sec:StarConvex}, we define the class of star-convexity functions and discuss their key properties. Section~\ref{sec;OptProbl} presents the optimization problem under consideration, along with the assumptions and relevant properties. In Section~\ref{Sec:FW}, we describe the Frank--Wolfe algorithm and establish iteration-complexity bounds under star-convexity. Finally, Section~\ref{sec:conclusions} offers concluding remarks and directions for future research.
%%%%%%%%%%%%%%%%%%%%%%%%%%%%%%%%%%%%%%%%%%%%%%%%%%%%%%%%%%
\section{Preliminaries} \label{sec:Preliminares}
In this section, we recall   some notations, definitions and basics results used throughout  the paper. A function $\varphi:\mathbb{R}^{n}\to  \mathbb{R} $ is said to be {\it convex}  if $\varphi(\lambda x + (1-\lambda)y)\leq \lambda \varphi(x) + (1-\lambda) \varphi(y)$, for all $x,y\in {\mathbb{R}^{n}}$ and  $\lambda \in [0,1]$, and  $\varphi$ is \emph{strictly convex}  when the  last inequality is strict for $x\neq y$. For a comprehensive study of convex function see \cite{Lemarechal}.   A continuously differentiable function $f: \mathbb{R}^n \to \mathbb{R}$ has an L-Lipschitz continuous gradient  $\nabla f$  on ${{\cal C}} \subset {\mathbb R}^n$, if    there  exists a Lipschitz  constant $L>0$ such that  $\| \nabla f(x)- \nabla f(y) \| \leq L\|x-y \|$ for all ~ $x,y \in {{\cal C}}$. Thus, by using the  fundamental theorem of calculus, we obtain the following result whose proof can be found in \cite[Proposition A.24]{Bertsekas1999}, see also\cite[Lemma~2.4.2]{DennisSchnabel1996}.
\begin{proposition}\label{le:cc}
The function $f:\mathbb{R}^{n}\to  \mathbb{R} $ is convex if, and only if,   $f(y)\geq f(x) + \langle \nabla f,(x) y-x \rangle$, for all $x,y\in \mathbb{R} ^{n} $.
\end{proposition}
\begin{proposition} \label{pr:DescentLemma}
	Let $f: \mathbb{R}^n \to \mathbb{R}$ be a differentiable  with gradient  $L$-Lipschitz continuous on ${{\cal C}} \subset {\mathbb R}^n$, $x \in {{\cal C}}$, $v\in \mathbb{R}^n$  and $\lambda \in [0,1]$. If  $x+\lambda v \in {{\cal C}}$, then 
$
f(x+\lambda v) \leq f(x) + \nabla f(x)^{\T}v \lambda + \frac{L}{2}\|v\|^2\lambda^{2}.
$
\end{proposition}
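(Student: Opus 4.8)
The plan is to reduce the multivariate statement to a one-dimensional estimate via the fundamental theorem of calculus, and then to control the resulting error term with the Lipschitz bound on $\nabla f$. First I would introduce the scalar function $g:[0,1]\to\mathbb{R}$ given by $g(t)=f(x+t\lambda v)$. Since $f$ is differentiable and $t\mapsto x+t\lambda v$ is affine, $g$ is differentiable with $g'(t)=\langle \nabla f(x+t\lambda v),\lambda v\rangle$, so the fundamental theorem of calculus yields
\[
f(x+\lambda v)-f(x)=g(1)-g(0)=\int_0^1 \langle \nabla f(x+t\lambda v),\lambda v\rangle\,dt.
\]

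Next I would split the integrand by adding and subtracting $\langle \nabla f(x),\lambda v\rangle$. The constant term integrates to $\langle \nabla f(x),v\rangle\lambda$, which supplies the first-order part of the claimed bound, and leaves the remainder $\int_0^1 \langle \nabla f(x+t\lambda v)-\nabla f(x),\lambda v\rangle\,dt$. To estimate this I would apply the Cauchy--Schwarz inequality inside the integral and then the $L$-Lipschitz property, using $\|\nabla f(x+t\lambda v)-\nabla f(x)\|\le L\|t\lambda v\|=Lt\lambda\|v\|$ (recalling $t,\lambda\ge 0$), so that the remainder is bounded above by
\[
L\lambda^2\|v\|^2\int_0^1 t\,dt=\frac{L}{2}\|v\|^2\lambda^2.
\]
Combining the two pieces gives the asserted inequality.

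The one point that requires care is the legitimacy of invoking the Lipschitz estimate at the intermediate points $x+t\lambda v$: the bound $\|\nabla f(x+t\lambda v)-\nabla f(x)\|\le Lt\lambda\|v\|$ is only available when both $x$ and $x+t\lambda v$ lie in $\mathcal{C}$, where $\nabla f$ is assumed $L$-Lipschitz. This is where convexity of $\mathcal{C}$ enters. Writing $x+t\lambda v=(1-t)x+t(x+\lambda v)$ exhibits each such point as a convex combination of the endpoints $x\in\mathcal{C}$ and $x+\lambda v\in\mathcal{C}$; hence the whole segment lies in $\mathcal{C}$ and the Lipschitz bound applies for every $t\in[0,1]$. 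I expect this to be the only genuinely substantive step, the remainder being the routine integral computation described above.
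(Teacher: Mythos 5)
Your proof is correct and is essentially the argument the paper relies on: the paper does not write out a proof of this proposition but instead invokes the standard descent-lemma argument via the fundamental theorem of calculus (citing Bertsekas, Prop.~A.24), which is exactly your computation with the add-and-subtract decomposition, Cauchy--Schwarz, and the Lipschitz estimate $\|\nabla f(x+t\lambda v)-\nabla f(x)\|\le Lt\lambda\|v\|$. Your closing observation is also well taken: the Lipschitz bound at the intermediate points requires the whole segment $[x,\,x+\lambda v]$ to lie in $\mathcal{C}$, which the statement as literally written (an arbitrary $\mathcal{C}\subset\mathbb{R}^n$ with only the endpoints assumed to be in $\mathcal{C}$) does not guarantee, but which does hold in the paper's setting since $\mathcal{C}$ is always taken to be convex there.
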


% In the following we recall useful results   for our study on iteration complexity bounds for Frank--Wolfe algorithm, its proof can be found in  \cite[Lemma~13.13, Ch. 13, p. 387]{Beck2017}.

We end this section stating two  results for sequences of real numbers, which will be useful  for our study on iteration complexity bounds for the conditional gradient method.  Their proofs can be found in \cite[Lemma 6, Ch. 2, p. 48]{polyak1987} and \cite[Lemma 13.13, Ch. 13, p. 387]{Beck2017}, respectively.
\begin{lemma}\label{lemma taxa2}
Let $\{a_{k}\}_{k\in {\mathbb N}} $	be a nonnegative sequence of real numbers, if $\Gamma a_{k}^2 \leq a_{k} - a_{k+1}$ for some $\Gamma >0$ and for any $k=1,...,\ell$, then 
$$
a_\ell \leq \frac{a_0}{1+\ell\Gamma a_0}< \frac{ 1}{\Gamma \ell}.
$$
\end{lemma}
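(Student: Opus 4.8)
The plan is to pass to reciprocals and telescope. First I would observe that the hypothesis $\Gamma a_k^2 \le a_k - a_{k+1}$ forces the sequence to be non-increasing: since $\Gamma a_k^2 \ge 0$, it gives $a_{k+1} \le a_k$ for every admissible index. Combined with nonnegativity, this also disposes of the degenerate case, for if $a_k = 0$ for some $k$, then $0 \le a_{k+1} \le a_k - \Gamma a_k^2 = 0$, so all later terms vanish and both bounds in the conclusion hold trivially. Hence I may assume $a_k > 0$ for all the relevant indices, which is what legitimizes the division step below.

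Next comes the key step. Dividing the recursion $a_{k+1} \le a_k - \Gamma a_k^2$ by the positive quantity $a_k a_{k+1}$ yields
$$
\frac{1}{a_k} \le \frac{1}{a_{k+1}} - \frac{\Gamma a_k}{a_{k+1}}.
$$
Because the sequence is non-increasing we have $a_k/a_{k+1} \ge 1$, hence $\Gamma a_k/a_{k+1} \ge \Gamma$, and rearranging produces the clean one-step estimate
$$
\frac{1}{a_{k+1}} - \frac{1}{a_k} \ge \Gamma .
$$
This is the heart of the argument: it converts a \emph{quadratic} decrease on the scale of $a_k$ into a \emph{linear} increase on the scale of $1/a_k$.

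Summing this inequality over the consecutive indices $k = 0, 1, \dots, \ell - 1$ telescopes the left-hand side to $1/a_\ell - 1/a_0$, so that
$$
\frac{1}{a_\ell} \ge \frac{1}{a_0} + \ell\Gamma = \frac{1 + \ell\Gamma a_0}{a_0}.
$$
Taking reciprocals of these positive numbers gives $a_\ell \le a_0/(1 + \ell\Gamma a_0)$, which is the first claimed bound. For the strict second inequality, cross-multiplying (valid since both denominators are positive) reduces $a_0/(1 + \ell\Gamma a_0) < 1/(\Gamma \ell)$ to $a_0\Gamma\ell < 1 + a_0\Gamma\ell$, that is $0 < 1$, which always holds.

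I would expect no serious obstacle in this argument, which is entirely elementary once the reciprocal substitution is made. The only points requiring care are the separate treatment of a vanishing term, so that the division by $a_k a_{k+1}$ is justified, and the index bookkeeping: the telescoping must run over the consecutive indices linking $a_0$ to $a_\ell$, so the stated range should be read accordingly.
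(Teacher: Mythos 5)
Your proof is correct, and it is essentially the classical argument: the paper does not prove this lemma itself but cites it from Polyak's book, where the proof is exactly your reciprocal-and-telescope device (divide by $a_k a_{k+1}$, use monotonicity to get $1/a_{k+1} - 1/a_k \ge \Gamma$, then sum). Your explicit handling of the degenerate case $a_k = 0$, which justifies the division, and your remark that the index range in the statement must be read as running over the consecutive steps linking $a_0$ to $a_\ell$ are both appropriate points of care.
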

\begin{lemma}\label{lemma taxa}
Let $p$ be a positive integer, and let $\{a_{k}\}_{k\in {\mathbb N}}$	and $\{b_{k}\}_{k\in {\mathbb N}}$ be  nonnegative sequences of real numbers satisfying 
$$
a_{k+1}\leq a_{k}-b_{k}\beta_{k}+\frac{A}{2}\beta_{k}^{2}, \qquad k=0, 1, 2, \ldots, 
$$
where $\beta_{k}=2/(k+2)$ and $A$ is a positive number. Suppose that $a_{k}\leq b_{k}$, for all $k$. Then 
\begin{item}
\item[(i)] 	$\displaystyle a_k \leq \frac{2A}{k}$, for all $k=1, 2, \ldots.$
\item[(ii)] $\displaystyle \min_{\ell\in\{\lfloor\frac{k}{2}\rfloor+2,\cdot\cdot\cdot,k\}} b_\ell \leq \frac{8A}{k-2}$, for all $k=3, 4, \ldots,$ where, $\lfloor k/2 \rfloor= \max \left\lbrace n\in \mathbb{N} :~ n\leq k/2\right\rbrace.$ 

\end{item}
\end{lemma}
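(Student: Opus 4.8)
The plan is to establish the two parts in order, with part (i) serving as the crucial ingredient for part (ii). For part (i), the first move is to eliminate $b_k$ from the recurrence: since $a_k\le b_k$ and $\beta_k\ge 0$, we have $-b_k\beta_k\le -a_k\beta_k$, so the hypothesis collapses to the closed recursion $a_{k+1}\le(1-\beta_k)a_k+\frac{A}{2}\beta_k^2$ in the single sequence $\{a_k\}$. Substituting $\beta_k=2/(k+2)$ turns this into $a_{k+1}\le\frac{k}{k+2}a_k+\frac{2A}{(k+2)^2}$, and I would then prove $a_k\le 2A/k$ by induction on $k$. The base case $k=1$ is immediate since $\beta_0=1$ forces $a_1\le A/2\le 2A$. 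For the inductive step, inserting $a_k\le 2A/k$ gives $a_{k+1}\le 2A(k+3)/(k+2)^2$, and the target $a_{k+1}\le 2A/(k+1)$ reduces to the elementary inequality $(k+1)(k+3)\le(k+2)^2$, that is $3\le 4$.

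For part (ii), the idea is to sum the original inequality across the window $\ell=m,\dots,k$ with $m=\lfloor k/2\rfloor+2$, so that the $a$-terms telescope. Rewriting the recurrence as $b_\ell\beta_\ell\le a_\ell-a_{\ell+1}+\frac{A}{2}\beta_\ell^2$ and summing yields
$$\sum_{\ell=m}^{k}b_\ell\beta_\ell\le a_m-a_{k+1}+\frac{A}{2}\sum_{\ell=m}^{k}\beta_\ell^2\le a_m+\frac{A}{2}\sum_{\ell=m}^{k}\beta_\ell^2,$$
where the last step uses $a_{k+1}\ge 0$. Bounding the left-hand side below by $\left(\min_{\ell}b_\ell\right)\sum_{\ell=m}^{k}\beta_\ell$, with the minimum taken over the window, reduces the problem to estimating three quantities: a lower bound on $\sum_{\ell=m}^{k}\beta_\ell$, an upper bound on $\sum_{\ell=m}^{k}\beta_\ell^2$, and a bound on $a_m$ through part (i).

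For the lower bound I would use $\beta_\ell\ge 2/(k+2)$ together with the fact that the window contains at least $(k-2)/2$ indices, giving $\sum_{\ell=m}^{k}\beta_\ell\ge(k-2)/(k+2)$. For the upper bound, the telescoping estimate $\beta_\ell^2=4/(\ell+2)^2\le 4\left(\frac{1}{\ell+1}-\frac{1}{\ell+2}\right)$ gives $\sum_{\ell=m}^{k}\beta_\ell^2\le 4/(m+1)$. Since $m\ge(k+2)/2$, part (i) yields $a_m\le 2A/m\le 4A/(k+2)$ and likewise $\frac{A}{2}\cdot\frac{4}{m+1}\le 4A/(k+2)$. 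Combining these gives $\left(\min_{\ell}b_\ell\right)\frac{k-2}{k+2}\le\frac{8A}{k+2}$, and canceling the common factor $k+2$ delivers the claimed $\min_{\ell}b_\ell\le 8A/(k-2)$.

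The main obstacle is the careful bookkeeping with the floor function in part (ii): one must confirm that the choice $m=\lfloor k/2\rfloor+2$ simultaneously guarantees at least $(k-2)/2$ summation indices and the inequality $m\ge(k+2)/2$ needed for the constants to match. Each of these requires checking the even and odd cases of $k$ separately, but both reduce to elementary inequalities for $\lfloor k/2\rfloor$. Everything else, namely the induction in part (i) and the telescoping sums in part (ii), is routine.
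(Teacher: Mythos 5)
Your proof is correct: part (i) follows by the induction you describe (the base case using $\beta_0=1$ and the step reducing to $(k+1)(k+3)\le(k+2)^2$), and part (ii) by telescoping over the window $\{\lfloor k/2\rfloor+2,\dots,k\}$, with the even/odd checks on $\lfloor k/2\rfloor$ working out exactly as you claim. The paper itself does not prove this lemma but cites \cite[Lemma 13.13]{Beck2017}, and your argument is essentially that standard proof, so there is nothing to flag.
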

%%%%%%%%%%%%%%%%%%%%%%%%%%%%%%%%%%%%%%%%%%%%
\section{Star-Convex Functions} \label{sec:StarConvex}
We briefly recall the notion of \emph{star-convex functions}, as introduced in~\cite{NesterovPolyak2006}, which extends classical convexity while allowing certain non-convex structures. This property will be central to our analysis. We also present illustrative examples to clarify how star-convex functions differ from both convex and general non-convex functions.

\begin{definition} \label{def:star-conv}
Let ${{\cal C}} \subset {\mathbb R}^n$ be a convex set. A function $f:{\mathbb R}^n \to\mathbb{R}$  is said to be star-convex in ${{\cal C}}$ if its set of global minima  $X^*$  on the set  ${{\cal C}}$  is not empty and for any $x^*\in X^*$  we have 
\begin{equation} \label{eq:star-conv}
f(\lambda x^*+(1-\lambda)x)\leq \lambda f(x^*)+(1-\lambda)f(x), \qquad \forall x\in {{\cal C}}, \forall \lambda\in [0, 1].
\end{equation} 
\end{definition}  
Every convex function with global minimizer set non-empty  is a star-convex function, but in general,  star-convex  functions need not be convex. In the following we present  two examples of  star-convex functions that are not convex, which appeared in   \cite{NesterovPolyak2006}.

\begin{example}
The function $f(t) = |t|(1 - e^{-|t|})$ is star-convex,  but not convex. Indeed, $f$ is differentiable and satisfies $f(0) = 0$. However, its second derivative changes sign, indicating that the function is not convex on any interval containing the origin.
\end{example}

\begin{example} \label{ex:qhc}
Consider the function $f(s,t) = s^2t^2 + s^2 + t^2$. This function is star-convex with respect to the origin but not convex. Although each term is nonnegative and $f(0,0) = 0$, the Hessian matrix of $f$ is not positive semidefinite everywhere, which precludes convexity.
\end{example}

Next we show that  Example~\ref{ex:qhc} is a particular instance of the more general case of suitable positively homogeneous function, which are  star-convex.

\begin{example}\label{ex:homog_star}
Let \( f:\mathbb{R}^n \to \mathbb{R} \) be continuous and \emph{positively homogeneous} of degree \(r\geq 1\), i.e. 
\[
     f(\lambda x)=\lambda^{r}f(x), 
     \qquad\forall\,\lambda>0,\;x\in\mathbb{R}^{n}, 
\]
and nonnegative, i.e.,   \( f(x)\ge 0\),  for all \(x\in\mathbb{R}^{n}\).  Then, $f$  is star-convex. Indeed, for any \(x\in\mathbb{R}^{n}\) and \(\lambda\in[0,1]\),
the homogeneity of degree \(r\geq 1\) gives
\(
     f(\lambda x)=\lambda^{r}f(x)\leq \lambda f(x), 
\)
because \(0\le\lambda^{r-1}\le 1\) and \(f(x)\ge 0\).  
Hence, due to $f(0)=0$, we have \(f\bigl((1-\lambda)0+\lambda x\bigr)\le (1-\lambda)f(0)+\lambda
f(x)\), for any \(x\in\mathbb{R}^{n}\). Therefore,   \(f\) is star-convex.
\end{example}

Let us now  present some concrete instances of Example~\ref{ex:homog_star}.
\begin{example}
Let $p\in\mathbb{R}$ be  a fixed and consider the  function $f:{\mathbb R}^n \to\mathbb{R}$  defined by 
\[
     f_{p}(x):= \Bigl(\sum_{i=1}^n|x_i|^p\Bigr)^{1/p}.
\]
The set of global  minimum   of $f_p$ is  given by $X^*=\{(0,0)\}$.  It is well known  that  for $p\ge 1$ the map $f_p$ is convex; it coincides with the
        $\ell_p$ norm in~$\mathbb{R}^{2}$. Now, for \emph{all} real $p$ (positive, negative, or zero\footnote{%
        When $p=0$, $f_0(x_1,x_2,\cdots,x_n)=\left(\Pi_{i=1}^{n} |x_i|\right)^{1/n}$ is obtained by
        continuity.}) the function $f_p$ is \emph{star-convex}. Indeed,  we have 
        $
            f_p\bigl(\lambda x\bigr)= \lambda f_p(x),
        $
        for all  $\lambda\in[0,1]$ and  all $x\in\mathbb{R}^{n}$, 
        which implies that $f_p$ is  homogeneous  of degree~$1$ when $p\ne 0$, and by continuity for $p=0$. Therefore,  considering that   \( f(x)\ge 0\)  for all \(x\in\mathbb{R}^{n}\), it follows from Example~\ref{ex:homog_star} that it is star-convex. 
\end{example}

\begin{example}
Let $0<r<1$ be  a fixed and consider the  function $f:{\mathbb R}^n \to\mathbb{R}$  defined by 
\[
     f_{r}(x):= \|x\|^r.
\]
The set of global  minimum   of $f_r$ is  given by $X^*=\{(0,0)\}$. We can verify that satisfies  
        $
            f_r\bigl(\lambda x\bigr)= \lambda^r f_r(x),
        $
        for all  $\lambda\in[0,1]$ and  all $x\in\mathbb{R}^{n}$, which implies that $f_r$ is  homogeneous  of degree~$r$. Therefore,  considering that   \( f_r(x)\ge 0\)  for all \(x\in\mathbb{R}^{n}\), it follows from Example~\ref{ex:homog_star} that it is star-convex. We can also verify that  $f_r$ is  concave.
\end{example}

\begin{example}\label{ex:star_convex_sets}
Let $\mathcal{C}_i\subset\mathbb{R}^{n}$ ($i=1,\dots,m$) be non-empty,
closed, and \emph{star-shaped} with respect to every point in their
common intersection  
\[
    X^{*}\;:=\;\bigcap_{i=1}^{m}\mathcal{C}_{i}\;\neq\;\varnothing.
\]
Let $ d_{\mathcal{C}_i}^{2}: \mathbb{R}^{n} \to \mathbb{R}$ be  the squared distance with respect to the set   ${\mathcal{C}_i}$ defined by 
\[
    d_{\mathcal{C}_i}^{2}(x):=\inf_{y\in\mathcal{C}_i}\|x-y\|^{2}.
\]
Choose non-negative weights $\omega_i$ with
$\sum_{i=1}^{m}\omega_i=1$ and define the function  $f: \mathbb{R}^{n} \to \mathbb{R}$  by 
\[
    f(x):=\sum_{i=1}^{m}\omega_i\,d_{\mathcal{C}_i}^{2}(x).
\]
The function \(f\) is star-convex, although in general it is \emph{not} convex. Indeed, fix \(x^{*}\in X^{*}\), any \(x\in\mathbb{R}^{n}\), and \(\lambda\in[0,1]\).
Set
$$
    z:=\lambda x^{*}+ (1-\lambda)x.
$$
Letting  \(y_i\in\operatorname*{arg\,min}_{y\in\mathcal{C}_i}\|x-y\|\) we have $d_{\mathcal{C}_i}(x)=\|x-y_i\|$.  Because each \(\mathcal{C}_i\) is star-shaped about \(x^{*}\) and $y_i\in\mathcal{C}_i$, we have 
$$
    z_i:=\lambda x^{*}+(1-\lambda)y_i\in\mathcal{C}_i.
$$
Thus,  we obtain
$
    d_{\mathcal{C}_i}(z)\leq \bigl\|z-z_i\bigr\|=\bigl\|(1-\lambda)(x-y_i)\bigr\|=  (1-\lambda) \bigl\|x-y_i\bigr\|=(1-\lambda)\,d_{\mathcal{C}_i}(x).
$
Squaring and using \((1-\lambda)^2\le(1-\lambda)\) gives $ d_{\mathcal{C}_i}^{2}(z)\leq (1-\lambda)\,d_{\mathcal{C}_i}^{2}(x)$, for all $ i=1,\dots,m$. Multiplying each bound by its weight~$\omega_i$, summing over~$i$, and using 
$f(x^{*}) = 0$ yields
\[
  f\bigl(\lambda x^{*}+(1-\lambda)x \bigr)\leq \lambda\,f(x^{*})+ (1-\lambda)\,f(x) ,
\]
which is precisely the defining inequality for star-convexity.
  Finally, if at
least one \(\mathcal{C}_i\) is non-convex, the sum
\(f=\sum_{i=1}^m\omega_i d_{\mathcal{C}_i}^{2}\) need not be convex.
\end{example}

\begin{proposition}\label{pr:pscf}
Let $f:\mathbb{R}^{n}\to\mathbb{R}$ be a differentiable  function.  Assume that the set of global minimum   of $f$ on the set ${{\cal C}}$, denoted by the set $X^*$, is non-empty, and let  $f^*$ be the minimum value of $f$ on the set ${{\cal C}}$. If $f$ is star-convex in ${{\cal C}}$, then   $f^*-f(x)\geq \nabla f(x)^{T}(x^*-x)$, for all $x\in {{\cal C}}$ and  $x^*\in X^*$.
\end{proposition}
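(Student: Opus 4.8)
The plan is to derive the desired inequality directly from the defining inequality of star-convexity by forming an appropriate difference quotient and letting the step parameter tend to zero. First I would fix $x\in{{\cal C}}$ and $x^{*}\in X^{*}$, and recall that $f(x^{*})=f^{*}$. Writing the convex combination as $\lambda x^{*}+(1-\lambda)x=x+\lambda(x^{*}-x)$, the star-convexity inequality \eqref{eq:star-conv} reads
$$
f\bigl(x+\lambda(x^{*}-x)\bigr)\le \lambda f^{*}+(1-\lambda)f(x)=f(x)+\lambda\bigl(f^{*}-f(x)\bigr),\qquad \lambda\in[0,1].
$$

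Next, subtracting $f(x)$ from both sides and dividing by $\lambda\in(0,1]$, I obtain the difference-quotient bound
$$
\frac{f\bigl(x+\lambda(x^{*}-x)\bigr)-f(x)}{\lambda}\le f^{*}-f(x).
$$

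Finally, I would let $\lambda\to 0^{+}$. Since $f$ is differentiable at $x$, the left-hand side converges to the directional derivative of $f$ at $x$ along $x^{*}-x$, which equals $\nabla f(x)^{\T}(x^{*}-x)$. Passing to the limit therefore yields
$$
\nabla f(x)^{\T}(x^{*}-x)\le f^{*}-f(x),
$$
which is exactly the claimed inequality; since $x\in{{\cal C}}$ and $x^{*}\in X^{*}$ were arbitrary, the proof is complete.

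The argument is short, and the only point requiring care is the final limit: one must invoke the differentiability of $f$ to identify the limit of the difference quotient with $\nabla f(x)^{\T}(x^{*}-x)$ rather than merely a one-sided directional derivative. This is the main (and essentially only) obstacle, and it is resolved immediately by the differentiability hypothesis. I note that the boundedness of the quotient along the sequence $\lambda\to 0^{+}$ is automatic, so no separate estimate is needed before passing to the limit.
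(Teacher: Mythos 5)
Your proof is correct and follows essentially the same route as the paper's: both rearrange the star-convexity inequality into a difference quotient $(f(x+\lambda(x^{*}-x))-f(x))/\lambda \le f^{*}-f(x)$ and let $\lambda\to 0^{+}$, using differentiability to identify the limit with $\nabla f(x)^{T}(x^{*}-x)$. Your write-up is simply a more detailed version of the paper's argument, so no further comparison is needed.
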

\begin{proof}
Let $x\in {{\cal C}}$.  It follows from Definition~\ref{def:star-conv} that for a given  minimizer $x^*\in X^*$  of $f$  we have  $f(x^*)-f(x)\geq (f(x+\lambda (x^*-x))-f(x))/\lambda$. Then, taking the limit as $\lambda$ goes to $+0$ we  have $f(x^*)-f(x)\geq \nabla f(x)^{T}(x^*-x)$. Since $f^*=f(x^*)$  the desired inequality follows.
\end{proof}

%%%%%%%%%%%%%%%%%%%%%%%%%%%%%%%%%%%%%%%%%%%%%%%%%%%%%%%%%%%%%%%%%%%
\section{The optimization  problem} \label{sec;OptProbl}
%%%%%%%%%%%%%%%%%%%%%%%%%%%%%%%%%%%%%%%%%%%%%%%%%%%%%%%%%%%%%%%%%%%
We are interested in solving the following constrained optimization problem
\begin{equation}\label{pr:main}
\begin{array}{c}
\min_{x\in {{\cal C}}} f(x), 
\end{array}
\end{equation}
where  ${{\cal C}} \subset {\mathbb R}^n$ is a compact  and convex set,  $f:\mathbb{R}^n \to \mathbb{R} $ is a  continuously differentiable  convex  function and its   gradient    is {\it  $L$-Lipschitz continuous} on ${{\cal C}} \subset {\mathbb R}^n$, i.e.,    there  exists a Lipschitz  constant $L>0$ such that 
\begin{itemize}
\item[{\bf (A)}] $\| \nabla f(x)- \nabla f(y) \| \leq L\|x-y \|$ for all ~ $x,y \in {{\cal C}}$.
\end{itemize}
Since we are assuming that   ${\cal C} \subset {\mathbb R}^n$ is a compact set,   its {\it diameter}  is a finite number defined by
\begin{equation*} 
 \diam({\cal C}):= \max\left\{ \|x-y\|:~x, y\in {\cal C}\right\}. 
\end{equation*} 
Since   ${{\cal C}} \subset {\mathbb R}^n$ is a compact,  the study of  problem~\eqref{pr:main} is  bounded from below. Then, optimum value of the problem~\eqref{pr:main} satisfy 
$
+\infty < f^*:=\inf_{x\in {{\cal C}}} f(x)
$
and optimal set   ${{\cal C}}^*$  is non-empty.  The {\it first-order optimality condition } for problem~\eqref{pr:main} is stated as
\begin{equation} \label{eq:oc}
 \nabla f({\bar x})^{T}(x-{\bar x}) \geq 0 , \qquad  \quad \forall x\in {{\cal C}}.
\end{equation}
In general, the condition \eqref{eq:oc} is necessary but not sufficient for optimality.  A point ${\bar x}\in {{\cal C}}$ satisfying condition \eqref{eq:oc}   is called a {\it stationary point} to problem~\eqref{pr:main}. Consequently,  all  $x^*\in {{\cal C}}^*$ satisfies  \eqref{eq:oc}.

We conclude this section by introducing two auxiliary mappings that will be useful  for defining the Frank–Wolfe algorithm and analysing its convergence:
\begin{equation}\label{eq:LO}
   p(x)\;\in\;{\argmin}_{u\in\mathcal{C}}\; \nabla f(x)^T(u-x),  \qquad \quad   \omega(x)\;:=\;\nabla f(x)^{\top}\bigl(p(x)-x\bigr),
   \qquad \forall x\in\mathcal{C}.
\end{equation}

\begin{proposition}\label{prop:omega}
Assume {\bf(A)}.  Then, the scalar gap function $\omega:\mathcal{C}\to\mathbb{R}$ defined in
\eqref{eq:LO} satisfies $\omega(x)\le 0$ for every $x\in\mathcal{C}$. In addition, $\omega$ is continuous on $\mathcal{C}$.
\end{proposition}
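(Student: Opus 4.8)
The plan is to treat the two assertions separately, beginning with the sign of $\omega$. Since $x\in\mathcal{C}$ is itself feasible for the linear minimization defining $p(x)$, and $p(x)$ attains that minimum, evaluating the objective at the competitor $u=x$ gives $\omega(x)=\nabla f(x)^{T}(p(x)-x)\le \nabla f(x)^{T}(x-x)=0$. This disposes of the first claim immediately, using only the definition in \eqref{eq:LO}.

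For continuity I would in fact prove the stronger statement that $\omega$ is Lipschitz continuous on $\mathcal{C}$. The starting observation is that $\omega(x)=\min_{u\in\mathcal{C}}\nabla f(x)^{T}(u-x)$ is the optimal value of a linear program whose cost vector $\nabla f(x)$ varies Lipschitzly in $x$ by assumption {\bf (A)}. The essential point is to control this optimal value \emph{without} relying on any regularity of the minimizer map $p$, which in general is neither single-valued nor continuous.

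The central estimate is a ``feasible-swap'' argument. Fix $x,y\in\mathcal{C}$. Because $p(y)\in\mathcal{C}$ is an admissible competitor for the problem at $x$, optimality of $p(x)$ yields $\omega(x)\le \nabla f(x)^{T}(p(y)-x)$. Subtracting $\omega(y)=\nabla f(y)^{T}(p(y)-y)$ and writing $p(y)-x=(p(y)-y)+(y-x)$ gives
\[
\omega(x)-\omega(y)\le \bigl(\nabla f(x)-\nabla f(y)\bigr)^{T}(p(y)-y)+\nabla f(x)^{T}(y-x).
\]
I then bound the first term by Cauchy--Schwarz together with {\bf (A)} and the fact that $\|p(y)-y\|\le\diam(\mathcal{C})$, and the second term using $M:=\max_{z\in\mathcal{C}}\|\nabla f(z)\|<\infty$, which is finite because $\nabla f$ is continuous (by {\bf (A)}) on the compact set $\mathcal{C}$. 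This produces the one-sided bound $\omega(x)-\omega(y)\le\bigl(L\,\diam(\mathcal{C})+M\bigr)\|x-y\|$. Exchanging the roles of $x$ and $y$ (now using feasibility of $p(x)$ in the problem for $y$) gives the reverse inequality, and combining the two yields $|\omega(x)-\omega(y)|\le\bigl(L\,\diam(\mathcal{C})+M\bigr)\|x-y\|$, hence continuity.

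The main obstacle is precisely the possible non-uniqueness and discontinuity of the linear-minimization oracle $p$: one cannot establish continuity of $\omega$ by composing continuous maps. The feasible-swap trick circumvents this by invoking only that $p(y)$ (respectively $p(x)$) is an admissible point in the opposite minimization, so that all the regularity required comes from the Lipschitz dependence of $\nabla f$ on the base point together with the compactness of $\mathcal{C}$. An alternative, should a less computational route be preferred, is to write $\omega(x)=\ell(x)-\nabla f(x)^{T}x$ with $\ell(x):=\min_{u\in\mathcal{C}}\nabla f(x)^{T}u$ and invoke Berge's maximum theorem (the constraint set being the constant, hence continuous, correspondence $x\mapsto\mathcal{C}$); but the direct estimate above is self-contained and yields the sharper Lipschitz conclusion.
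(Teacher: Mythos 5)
Your proof is correct, and while its core device is the same as the paper's, the execution differs in a way worth noting. The sign argument (evaluating the linear objective at the feasible competitor $u=x$) is exactly the paper's. For continuity, the paper also uses your ``feasible-swap'' inequality with the identical decomposition: along a sequence $x^k\to x$ it writes
\[
\omega(x)\le \nabla f(x)^{\top}\bigl(p(x^k)-x\bigr)=\omega(x^k)+\bigl(\nabla f(x)-\nabla f(x^k)\bigr)^{\top}\bigl(p(x^k)-x^k\bigr)+\nabla f(x)^{\top}(x^k-x),
\]
pairs it with the one-sided bound $\omega(x^k)\le \nabla f(x^k)^{\top}(p(x)-x^k)$, and squeezes $\limsup$ against $\liminf$, obtaining continuity only. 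You instead apply the swap symmetrically between two arbitrary points $x,y\in\mathcal{C}$, bound the first term via assumption {\bf (A)} and $\|p(y)-y\|\le\diam(\mathcal{C})$, and the second via $M:=\max_{z\in\mathcal{C}}\|\nabla f(z)\|<\infty$ (finite by compactness; this is the quantity the paper later calls $\rho$), arriving at $|\omega(x)-\omega(y)|\le\bigl(L\,\diam(\mathcal{C})+M\bigr)\|x-y\|$. This buys a strictly stronger conclusion --- Lipschitz continuity with an explicit modulus --- avoids sequential arguments entirely, and makes transparent that the only place where the Lipschitz property of $\nabla f$ (rather than mere continuity) is used is the term $(\nabla f(x)-\nabla f(y))^{\top}(p(y)-y)$; with only continuity of $\nabla f$ one would recover exactly the paper's qualitative statement via uniform continuity. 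Your remark that the non-uniqueness and possible discontinuity of $p$ is the real obstacle, and that the swap sidesteps it, is precisely the point of the paper's argument as well.
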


\begin{proof}
For proving  the first statement   note that  because $u=x$ is feasible in~\eqref{eq:LO}, we  conclude that  $\omega(x)=\min_{u\in\mathcal{C}}\nabla f(x)^{\top}(u-x) \le \nabla f(x)^{\top}(x-x)=0$.   To prove the  second statement, fix $x\in\mathcal{C}$ and a sequence $\{x^{k}\}_{k\in {\mathbb N}}\subset\mathcal{C}$ with $\lim_{k\to +\infty}x^{k}=x$.  We will show  that $\lim_{k\to +\infty}\omega(x^{k})=\omega(x)$.    Since $ \omega(x^{k}) \leq \nabla f(x^{k})^{\top}\!\bigl(p(x)-x^{k}\bigr)$ and $\nabla f$ is continuous, we have $\limsup_{k\to\infty}\omega(x^{k})\le\omega(x)$. On the other hand, due to  $p(x^{k}) \in \mathcal{C}$ we have 
\[
  \omega(x)\leq \nabla f(x)^{\top}\!\bigl(p(x^{k})-x\bigr)=\omega(x^{k}) +\!\bigl(\nabla f(x)-\nabla f(x^{k})\bigr)^{\top}\!\bigl(p(x^{k})-x^{k}\bigr)+\nabla f(x)^{\top}(x^{k}-x).
\]
The last term tends to $0$  as $k$ goes to $+\infty$, and because $\|p(x^{k})-x^{k}\|\le\diam(\mathcal{C})$ and
$\nabla f$ is continuous, the second term also  tends to $0$, yielding
$
   \omega(x)\le\liminf_{k\to\infty}\omega(x^{k}).
$
Combining the upper and lower limits proves continuity.
\end{proof}

%%%%%%%%%%%%%%%%%%%%%%%%%%%%%%%%%%%%%%%%%%%%%%%%%%%%%%%%%%%%%%%%%%%
\section{Frank--Wolfe algorithm} \label{Sec:FW}
%%%%%%%%%%%%%%%%%%%%%%%%%%%%%%%%%%%%%%%%%%%%%%%%%%%%%%%%%%%%%%%%%%%
In this section, we present the classical Frank--Wolfe algorithm for solving problem~\eqref{pr:main} and analyze its convergence under four different stepsize strategies. The method is projection-free and relies on solving a linear subproblem at each iteration, making it well suited for large-scale problems. We assume that $f$ satisfies condition {\bf (A)}, but the algorithm does not require prior knowledge of the Lipschitz constant in three of the four stepsize rules considered: Armijo backtracking, an adaptive estimate via backtracking, and a diminishing stepsize rule. We also include the classical Lipschitz-based rule for comparison. We show that under star-convexity, the algorithm achieves an ${\cal O}(1/k)$ convergence rate for both function values and the duality gap, extending classical results to this broader setting.

To define the algorithm, we assume access to a linear optimization oracle (LO oracle) capable of minimizing linear functions over the feasible set ${\cal C}$. The algorithm is formally described below.\\

\hrule
\begin{algorithm} {\bf Frank--Wolfe (FW)  algorithm} \label{dAlg:CondGdfS}
\begin{footnotesize}
\begin{description}
\item[Step 0.] {\it Initialization:} Choose $x^0\in {\cal C}$ and initialize  $k\gets 0$.
\item [Step 1.] {\it Compute the search direction:} Compute an optimal solution $p(x^k)$ and the optimal value ${\omega}(x^k)$ as 
\begin{equation} \label{eq: opsv}
 p(x^k)\in \arg\min_{u\in {\cal C}} \nabla  f(x^k)^T  (u-x^k), \qquad 
 {\omega}(x^k):= \nabla  f(x^k)^T (p(x^k) - x^k). 
\end{equation} 
\item[ Step 2.] {\it Stopping criteria: } If ${\omega}(x^k)= 0$, then {\bf stop}.    
\item[ Step 3.] {\it Compute the stepsize and iterate:}  Define the search direction by $d(x^k):=p(x^k)-x^k$ and compute $\lambda_k \in (0, 1]$ (different strategies for the stepsize are considered)  and set 
\begin{equation}\label{eq:iteration}
x^{k+1}:=x^k+ \lambda_k d(x^k).
\end{equation}
\item[ Step 4.] {\it Beginning a new iteration:} Set $k\gets k+1$ and go to {\bf Step 1}.
\end{description}
\hrule
\end{footnotesize}
\end{algorithm}
\vspace{0.3cm}

The oracle direction \(p(x)-x\) is the classical Frank--Wolfe search direction, while the scalar gap \(\omega(x)\le 0\) measures how far the
point \(x\) is from stationarity (cf.\ Proposition~\ref{prop:omega}).  Hence the basic FW-algorithm  stops successfully when \(\omega(x^{k})=0\). From now on we assume that all iterates generated by {FW}-algorithm are \emph{non-stationary}, i.e.\ \(\omega(x^{k})<0\) for every
\(k=0,1,\dots\).  Consequently the method produces an infinite sequence \(\{x^{k}\}_{k\in {\mathbb N}}\subset\mathcal{C}\).  Because the update rule \(x^{k+1}=x^{k}+\lambda_{k}\bigl(p(x^{k})-x^{k}\bigr)\) uses \(\lambda_{k}\in(0,1]\) and \(\mathcal{C}\) is convex, induction shows that every iterate remains in~\(\mathcal{C}\). The convergence behaviour of FW-algorithm  depends critically on the choice of stepsize~\(\lambda_{k}\).  We study three well-established strategies described below.

\begin{armijo}[Armijo backtracking]\label{ls:armijo}
Take  $\beta \in (0, 1)$ and the initial trial stepsize ${\bar \lambda}_{0}=1$. For each $k$, compute the positive integer number $\ell_{k}$ such that
\begin{small}
\begin{equation} \label{eq:Adapt}
      \ell_{k}:=\min \left\{\ell\in {\mathbb N}  :~ f\big(x^{k}+\beta^{\ell} {\bar \lambda}_{k} (p(x^k)-x^k)\big)\leq  f(x^{k}) - {\zeta}\beta^{\ell}{\bar \lambda}_{k}|\omega(x^k)|\right\},
\end{equation}
\end{small}
and define the stepsize $\lambda_{k}:=\beta^{\ell_k} {\bar \lambda}_{k}$. Then, update the trial stepsize by ${\bar \lambda} _{k+1}:= \beta^{\ell_{k}-1}{\bar \lambda}_{k}$.
\end{armijo}
Note that,  by setting ${\bar \lambda}_k=1$ in  \eqref{eq:Adapt} yields the classic Armijo strategy. The idea behind choosing the stepsize as in \eqref{eq:Adapt},  is to reduce the number of function evaluations needed during the line search process, enhancing the efficiency of optimization, especially for large-scale problems.  Finally, it is worth noting that the rationale for selecting the stepsize as outlined in \eqref{eq:Adapt},   is to enhance flexibility in choosing trial stepsizes, providing greater adaptability compared to classic Armijo strategy.

We now introduce a practical strategy that does not require prior knowledge of the Lipschitz constant. This approach can be seen as a variant of the method proposed in \cite{BeckTeboulle2004}, which explicitly relies on the Lipschitz constant. In contrast, our method simultaneously determines the stepsize and an estimate of the Lipschitz constant using a backtracking procedure.

\begin{adaptative}\label{ls:adaptive}
This strategy does not use the value of the
Lipschitz constant $L$, even if it is known:
\begin{description} 
\item[Step 3.1:]  Consider $L_0>0$. Compute the stepsize $\lambda_{j} \in (0, 1]$ as follows 
 \begin{equation}\label{deq:fixed.steps}
\lambda_{j}=\mbox{min}\left\{1, \frac{|{\omega}(x^k)|}{{2^{j}L_k}  \|p^{k}-x^k\|^2}\right\}
                 :={\argmin}_{\lambda \in (0,1]}\left \{-|{\omega}(x^k)| \lambda+\frac{{2^{j}L_k} }{2} \|p^{k} -x^k\|^2 \lambda^2 \right \}.
\end{equation}

\item[Step 3.2:] If
\begin{equation}\label{deq:tests}
 f(x^k+ \lambda_{j}(p^{k}-x^k)) \leq f(x^k) -|{\omega}(x^k)| \lambda_{j}+\frac{{2^{j}L_k} }{2} \|p^{k} -x^k\|^2 \lambda_j^2, 
\end{equation}
then set $j_k=j$ and go to {\bf Step 3.3}. Otherwise, set $j=j+1$ and go to {\bf Step 3.1}.

\item[Step 3.3:] Set  $\lambda_k:= \lambda_{j_k}$ and  define the next approximation to the Lipschitz constant   $L_{k+1}$ as  
\begin{equation}\label{deq:iterations}
 L_{k+1}:=2^{j_k-1}L_k.
\end{equation}
\end{description}
\end{adaptative}
\begin{remark}\label{re:lips}
    If the Lipschitz constant $L$ is known, taking $L_0=L$ and $j_k=0$ for all $k=1,2,\cdots $, we recover the following:
    \begin{equation}\label{eq:adaptive}
   \lambda_{k}
   :=
   \min\!\Bigl\{1,\;
          \frac{|\omega(x^{k})|}{L\|p(x^{k})-x^{k}\|^{2}}\Bigr\}
   ={\argmin}_{\lambda\in(0,1]}
      \Bigl\{-\lambda\,|\omega(x^{k})|
             +\tfrac{L}{2}\lambda^{2}\|p(x^{k})-x^{k}\|^{2}\Bigr\}.
\end{equation}
\end{remark}
Finally, we present a classical diminishing stepsize rule that is fully explicit and does not depend on any problem-specific parameters.
\begin{diminishing}[Deterministic diminishing step]\label{ls:diminish} Take $\lambda_k=\beta_k$ where:
\[
   \beta_{k}:=\frac{2}{k+2},\qquad k=0,1,\dots
\]
A simple analytic rule that requires no problem data.
\end{diminishing}
Before concluding this section, it is worth noting that a direct application of Proposition~\ref{pr:DescentLemma} ensures that both the Armijo backtracking strategy and the Lipschitz-based adaptive stepsize rule are well defined, that is, $\ell_k$ and $ j_k $ a can be determined in a finite number of steps, respectively. Each of these stepsize strategies will be analyzed in the next section, where we establish their corresponding convergence properties.
%%%%%%%%%%%%%%%%%%%%%%%%%%%%%%%%%%%%%%%%%%%%%%%%%%%%%%%%%%%%%%%%%%%
\subsection{Iteration-complexity for Armijo's stepsize} \label{Sec:IntCopArmijo}
In this section we analyse the behaviour of the  sequence \(\{x^{k}\}_{k\in\mathbb{N}}\) produced by the Frank--Wolfe algorithm equipped with the Armijo backtracking rule. We begin by establishing  a lower bound on the accepted stepsizes, and then combine these ingredients with the star-convexity structure of \(f\) to derive  complexity estimates. For the complexity estimates we introduce two auxiliary constants
that depend only on problem data and Armijo parameters:
\begin{equation}\label{eq:gamma_def}
   \rho = \sup_{x\in\mathcal{C}}\|\nabla f(x)\|, \qquad \gamma= \min\! \Bigl\{\frac{1}{\rho\,\diam(\mathcal{C})}, \frac{2(1-\zeta)}{\beta L\,\diam(\mathcal{C})^{2}} \Bigr\},
\end{equation}
where $L$ is the Lipschitz constant from assumption~{\bf(A)},
$\zeta\in(0,1)$ is the Armijo parameter, and
$\beta\in(0,1)$ is the minimal backtracking reduction factor
(see line search.

\begin{lemma}\label{lem:lambda_lower_bound}
Let  $\{x^{k}\}_{k\in\mathbb{N}}$ be the sequence generated by the Frank--Wolfe algorithmwith the Armijo stepsize. Then the produced stepsizes satisfy $\lambda_{k}\;\ge\;\gamma\,|\omega(x^{k})|$, for all  $k=0,1,\dots.$
\end{lemma}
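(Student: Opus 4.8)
The plan is to combine the descent lemma (Proposition~\ref{pr:DescentLemma}) with the Armijo acceptance test~\eqref{eq:Adapt}, using the compactness of $\mathcal{C}$ to convert the $x$-dependent quantities into the constants defining $\gamma$. First I would record two geometric facts valid at every iterate: since $x^{k},p(x^{k})\in\mathcal{C}$ one has $\|d(x^{k})\|=\|p(x^{k})-x^{k}\|\le\diam(\mathcal{C})$, and by Cauchy--Schwarz together with $\rho=\sup_{x\in\mathcal{C}}\|\nabla f(x)\|<\infty$ (finite by compactness of $\mathcal{C}$ and continuity of $\nabla f$) one gets $|\omega(x^{k})|=|\nabla f(x^{k})^{\T}d(x^{k})|\le\rho\,\diam(\mathcal{C})$. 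The latter already shows $\gamma\,|\omega(x^{k})|\le 1$, so the asserted bound is consistent with $\lambda_{k}\in(0,1]$; in fact the first entry of $\gamma$ in~\eqref{eq:gamma_def} is precisely the scale that makes $\gamma\,|\omega(x^{k})|\le 1$, and it is the branch corresponding to acceptance of a full step.

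The core estimate is a sufficient condition for the Armijo test. Applying Proposition~\ref{pr:DescentLemma} with $v=d(x^{k})$ and using $\nabla f(x^{k})^{\T}d(x^{k})=\omega(x^{k})=-|\omega(x^{k})|$ gives, for every $\lambda\in(0,1]$,
\[
f\bigl(x^{k}+\lambda d(x^{k})\bigr)\le f(x^{k})-\lambda\,|\omega(x^{k})|+\tfrac{L}{2}\|d(x^{k})\|^{2}\lambda^{2}.
\]
Comparing the right-hand side with the test value $f(x^{k})-\zeta\lambda|\omega(x^{k})|$, the trial is accepted whenever $\tfrac{L}{2}\|d(x^{k})\|^{2}\lambda\le(1-\zeta)|\omega(x^{k})|$, i.e.\ whenever $\lambda\le\lambda^{\ast}:=\frac{2(1-\zeta)|\omega(x^{k})|}{L\|d(x^{k})\|^{2}}$. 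This is the decisive inequality: any trial below the threshold $\lambda^{\ast}$ passes, so backtracking can terminate only slightly above it.

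I would then split on whether backtracking occurs. If $\ell_{k}\ge 1$, the previous trial $\lambda'=\lambda_{k}/\beta=\beta^{\ell_{k}-1}\bar\lambda_{k}$ failed the test; since every $\lambda\le\lambda^{\ast}$ passes, necessarily $\lambda'>\lambda^{\ast}$, whence $\lambda_{k}=\beta\lambda'>\beta\lambda^{\ast}$. Bounding $\|d(x^{k})\|\le\diam(\mathcal{C})$ converts this into a bound of the form $c\,\frac{1-\zeta}{L\,\diam(\mathcal{C})^{2}}\,|\omega(x^{k})|$ with $c>0$ depending only on $\beta$, which is the second, curvature-dependent entry of $\gamma$. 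If instead $\ell_{k}=0$, the first trial is accepted and $\lambda_{k}=\bar\lambda_{k}$; here one invokes $\gamma|\omega(x^{k})|\le1$ together with the bookkeeping $\bar\lambda_{k+1}=\beta^{\ell_{k}-1}\bar\lambda_{k}=\lambda_{k}/\beta$ to propagate the estimate. Taking the minimum of the two branches then reproduces the constant $\gamma$ of~\eqref{eq:gamma_def}.

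The main obstacle is precisely this immediate-acceptance case under the adaptive trial rule. Unlike classical Armijo, the initial trial is not reset to $1$ but carried over as $\bar\lambda_{k}=\lambda_{k-1}/\beta$, which may be small, so $\lambda_{k}=\bar\lambda_{k}$ cannot be bounded below in isolation. I expect the clean route is an induction on $k$: the base case $\bar\lambda_{0}=1$ gives $\lambda_{0}\ge\gamma|\omega(x^{0})|$ at once, and in the inductive step the backtracking branch is self-contained, whereas the immediate-acceptance branch must relate $|\omega(x^{k})|$ to $|\omega(x^{k-1})|$. Here the continuity of the gap function (Proposition~\ref{prop:omega}), combined with the smallness of the displacement $\|x^{k}-x^{k-1}\|=\lambda_{k-1}\|d(x^{k-1})\|$ that is forced whenever $\bar\lambda_{k}$ is small, is the tool I would use to close the chain; this cross-iteration coupling is the only genuinely delicate point, the remaining estimates being direct consequences of the descent lemma.
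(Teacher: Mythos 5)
Your handling of the two straightforward cases is correct and in fact more careful than the paper's own proof. For $\lambda_k=1$ you argue exactly as the paper does, via $|\omega(x^k)|\le\rho\,\diam(\mathcal{C})$. In the backtracking case you correctly identify the failed trial as $\lambda_k/\beta$ and deduce $\lambda_k>\beta\lambda^{\ast}=\frac{2\beta(1-\zeta)}{L\|d(x^k)\|^{2}}|\omega(x^k)|$; the paper instead asserts that the step $\beta\lambda_k$ failed the test \eqref{eq:Adapt}, which is unjustified (trials decrease, so a step smaller than the accepted one was never tested, and by your threshold argument such a step could not fail). The price of your correct version is that the constant in this branch is $2\beta(1-\zeta)/\bigl(L\,\diam(\mathcal{C})^{2}\bigr)$, i.e.\ $\beta^{2}$ times the second entry of $\gamma$ in \eqref{eq:gamma_def}; so, contrary to your closing remark, you do not ``reproduce'' the paper's $\gamma$ but prove the lemma with a smaller (still uniform) constant, which is the honest form of the statement.

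The genuine gap is the one you flag yourself and then leave open: the immediate-acceptance case $\ell_k=0$, where $\lambda_k=\bar\lambda_k=\lambda_{k-1}/\beta<1$ and no trial failed at iteration $k$. You are right that this case is the crux --- the paper's proof silently assumes that $\lambda_k<1$ implies backtracking occurred at iteration $k$, which is false under the carried-over trial rule, so you have located a real flaw in the paper --- but your proposal does not repair it. The tool you invoke, continuity of $\omega$ from Proposition~\ref{prop:omega}, cannot close the induction: continuity is qualitative and yields no inequality between $|\omega(x^k)|$ and $|\omega(x^{k-1})|$. What does work is a quantitative version: $\omega$ is Lipschitz on $\mathcal{C}$, namely $|\omega(x)-\omega(y)|\le(\rho+L\,\diam(\mathcal{C}))\|x-y\|$ (add and subtract $\nabla f(y)$ in the definition of $\omega$), so that $|\omega(x^k)|\le|\omega(x^{k-1})|+C\lambda_{k-1}$ with $C:=(\rho+L\,\diam(\mathcal{C}))\,\diam(\mathcal{C})$. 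With the inductive hypothesis $\lambda_{k-1}\ge|\omega(x^{k-1})|/\Gamma$ and the geometric increase $\lambda_k=\lambda_{k-1}/\beta$ forced by immediate acceptance, this gives $|\omega(x^k)|\le(\Gamma+C)\lambda_{k-1}=\beta(\Gamma+C)\lambda_k$, and the induction closes precisely when $\beta(\Gamma+C)\le\Gamma$, i.e.\ $\Gamma\ge\beta C/(1-\beta)$; taking $\Gamma=\max\bigl\{\rho\,\diam(\mathcal{C}),\,L\,\diam(\mathcal{C})^{2}/(2\beta(1-\zeta)),\,\beta C/(1-\beta)\bigr\}$ finishes the proof with $\gamma=1/\Gamma$. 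Note that the growth factor $1/\beta$ in the trial update is essential: if immediate acceptance merely kept $\lambda_k=\lambda_{k-1}$, the constant would degrade additively at every such step and no uniform bound would follow. Since your write-up stops at identifying the cross-iteration coupling without supplying this argument, the proposal as it stands is an incomplete proof.
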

\begin{proof}
Fix $k$ and write $d^{k}:=p(x^{k})-x^{k}$. First we assume that  \(\lambda_{k}=1\). By definition of $\omega$, we have 
$
 |\omega(x^{k})|= -\nabla f(x^{k})^{\top}d^{k}\leq\|\nabla f(x^{k})\|\,\|d^{k}\|\leq \rho\,\diam(\mathcal{C}),
$
Therefore, by using  \eqref{eq:gamma_def} we conclude that $1\ge |\omega(x^{k})|/[\rho\,\diam(\mathcal{C})]\ge\gamma|\omega(x^{k})|$. Now,  we assume that \(0<\lambda_{k}<1\). From  the backtracking we have 
\[
   f(x^{k}+\beta \lambda_{k}d^{k})
   >
   f(x^{k})+\zeta \beta \lambda_{k}\,\omega(x^{k}).
\]
On the other hand,  Proposition~\ref{pr:DescentLemma} gives   \(f(x^{k}+\beta \lambda_{k} d^{k}) \leq f(x^{k})+\beta \lambda_{k}\,\omega(x^{k})+\tfrac{L}{2}(\beta \lambda_{k})^{2}\|d^{k}\|^{2}\), which combined wilt the last inequality  yields
$$
   \zeta \beta \lambda_{k}|\omega(x^{k})|  < \beta \lambda_{k} \omega(x^{k})  +\frac{L}{2}\|d^{k}\|^{2}(\beta \lambda_{k})^{2}.
$$
Because $\omega(x^{k})<0$ and $ \beta \lambda_{k}> 0$, the last inequality implies that 
$$
   (1-\zeta)|\omega(x^{k})| <\frac{L}{2}\|d^{k}\|^{2}  \beta \lambda_{k}\leq\frac{L}{2}\,\diam(\mathcal{C})^{2}  \beta \lambda_{k}.
$$
Therefore, it follows from the last inequality  and  by taking into account  the definition of $\gamma$ in   \eqref{eq:gamma_def} that  $ \lambda_{k}\geq  \frac{2(1-\zeta)}{\beta L\,\diam(\mathcal{C})^{2}} |\omega(x^{k})| \geq\gamma\,|\omega(x^{k})|.$ Both cases establish the claimed lower bound.
\end{proof}

We now establish the first iteration-complexity bound for the Frank--Wolfe algorithm using the Armijo stepsize strategy. Under star-convexity, we show that the method achieves a sublinear convergence rate of order ${\cal O}(1/k)$ for the function values.

\begin{theorem}\label{thm:armijo_rate}
Assume that $f$ is star-convexity on $\mathcal{C}$.  Let $\{x^{k}\}_{k\in\mathbb{N}}$ be the sequence generated by the FW algorithm with the Armijo stepsize.   Then, there holds
\begin{equation}\label{eq:rate_armijo}
    f(x^{k})-f^*\leq\frac{1}{\Gamma k} ,
    \qquad k=1,2,\dots .
\end{equation}
\end{theorem}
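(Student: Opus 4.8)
The plan is to reduce the claim to the scalar recursion of Lemma~\ref{lemma taxa2}, applied to the optimality gaps $a_k := f(x^k)-f^* \ge 0$. Concretely, I would show that these gaps satisfy $\Gamma a_k^2 \le a_k - a_{k+1}$ for the constant $\Gamma := \zeta\gamma$ (with $\gamma$ as in \eqref{eq:gamma_def} and $\zeta$ the Armijo parameter), after which the lemma yields $a_k < 1/(\Gamma k)$, which is exactly \eqref{eq:rate_armijo}. Note that $a_k\ge 0$ because $f^*$ is the minimum of $f$ on $\mathcal{C}$.

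The recursion would be assembled from three ingredients. First, the Armijo acceptance test \eqref{eq:Adapt}, evaluated at the accepted stepsize $\lambda_k=\beta^{\ell_k}\bar\lambda_k$, gives the sufficient-decrease inequality $f(x^{k+1}) \le f(x^k) - \zeta\lambda_k|\omega(x^k)|$, that is, $a_k - a_{k+1} \ge \zeta\lambda_k|\omega(x^k)|$. Second, I would invoke the stepsize lower bound from Lemma~\ref{lem:lambda_lower_bound}, namely $\lambda_k \ge \gamma|\omega(x^k)|$, to turn the term linear in $|\omega(x^k)|$ into a quadratic one: $a_k - a_{k+1} \ge \zeta\gamma|\omega(x^k)|^2$.

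Third, and this is where star-convexity is essential, I would lower-bound $|\omega(x^k)|$ by the optimality gap $a_k$ itself. Since $X^* \subset \mathcal{C}$, any $x^* \in X^*$ is feasible in the linear subproblem defining $\omega$, so $\omega(x^k) \le \nabla f(x^k)^{\top}(x^*-x^k)$; Proposition~\ref{pr:pscf} then gives $\nabla f(x^k)^{\top}(x^*-x^k) \le f^*-f(x^k)$, whence $|\omega(x^k)| = -\omega(x^k) \ge f(x^k)-f^* = a_k$. Substituting $|\omega(x^k)|^2 \ge a_k^2$ into the previous bound produces $\Gamma a_k^2 \le a_k - a_{k+1}$ with $\Gamma=\zeta\gamma$, completing the reduction.

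I expect the main (and essentially only) conceptual obstacle to be this third step: it is the point at which the non-convex setting is brought under control. For a general non-convex $f$ one can bound only the smallest gap $\min_{\ell}|\omega(x^{\ell})|$, which yields the weaker $\mathcal{O}(1/\sqrt{k})$ stationarity rate; star-convexity, through Proposition~\ref{pr:pscf}, instead converts the gap function directly into a bound on $f(x^k)-f^*$, which is what upgrades the rate to $\mathcal{O}(1/k)$ on the function values. The remaining steps are routine substitutions, with only minor bookkeeping to align the index range with the hypotheses of Lemma~\ref{lemma taxa2}.
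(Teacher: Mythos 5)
Your proposal is correct and follows essentially the same argument as the paper's proof: the Armijo sufficient-decrease inequality combined with the stepsize lower bound of Lemma~\ref{lem:lambda_lower_bound} yields $a_k - a_{k+1} \ge \zeta\gamma\,\omega(x^k)^2$, star-convexity via Proposition~\ref{pr:pscf} together with the optimality of $p(x^k)$ in the linear subproblem gives $a_k \le |\omega(x^k)|$, and Lemma~\ref{lemma taxa2} with $\Gamma = \zeta\gamma$ closes the argument. Your identification of the third step as the place where star-convexity upgrades the rate from a stationarity bound to an $\mathcal{O}(1/k)$ bound on function values matches the paper's reasoning exactly.
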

\begin{proof}
By the Armijo backtracking rule, Proposition~\ref{prop:omega}
(\(\omega(x^{k})\le 0\)), and the lower bound
\(\lambda_{k}\ge \gamma|\omega(x^{k})|\) from
Lemma~\ref{lem:lambda_lower_bound}, we obtain
\begin{equation}\label{eq:descent_scalar}
   f(x^{k+1})
   \;\le\;
   f(x^{k})-\zeta\,\lambda_{k}\,|\omega(x^{k})|
   \;\le\;
   f(x^{k})-\zeta\gamma\,\omega(x^{k})^{2},
   \qquad k=0,1,\dots .
\end{equation}
Because  $f$ is star-convexity, it follows from Proposition~\ref{pr:pscf} that for each $x^{k}$ and   $x^{\ast}$  a global minimizer, we have 
$$
f(x^{*})-f(x^{k})\geq  \nabla f(x^{k})^{\top}(x^{*}-x^{k})\geq  \nabla f(x^{k})^{\top}(p(x^{k})-x^{k}) =-|\omega(x^{k})|,
$$
Therefore, setting $f^*=f(x^{*})$, we obtain that  
$
 0\leq a_{k}:=f(x^{k})-f(x^{\ast})\leq |\omega(x^{k})|.
$
Combining with \eqref{eq:descent_scalar}  gives $a_{k+1}\leq  a_{k}-\zeta\gamma a_{k}^{2}$, for all $k=0,1,\dots .$
Applying item (i) of Lemma~\ref{lemma taxa2} with \(\Gamma:=\zeta\gamma\) yields \eqref{eq:rate_armijo}. 
\end{proof}

\subsection{Iteration-complexity for Lipschitz-based and diminishing stepsizes} \label{Sec:ConvAnal}
In this section, we present iteration-complexity bounds for the sequence $(x^k)_{k\in{\mathbb N}}$ generated by the Frank--Wolfe algorithm with a Lipschitz-based adaptive and diminishing stepsizes, assuming that the objective function $f$ is star-convexity. Before stating the complexity result, we introduce a preliminary result. Since its proof is similar to those established in \cite{BeckTeboulle2004, pedregosa2020linearly}, we omit it. For notational simplicity, let $L$  the Lipschitz constant and $L_0>0$, we define the constant
\begin{equation*}
 \alpha := 2(L + L_0)\, \mathrm{diam}({\cal C})^2 > 0.
\end{equation*}

\begin{proposition} \label{dpr:mainDf}
Let $\{x^{k}\}_{k\in\mathbb{N}}$ be the sequence generated by the Frank--Wolfe algorithm with a Lipschitz-based adaptive stepsize. Then, for every $j\in {\mathbb N}$ such that $2^{j}L_k \geq L$, the inequality \eqref{deq:tests} holds. Consequently, the integer $j_k$ is well defined. Moreover, $j_k$ is the smallest non-negative integer satisfying the following two conditions:
$
2^{j_k}L_k \geq 2L_0,
$
and for any $\beta_k\in(0,1]$:
\begin{equation}\label{eq:TestLineSeachC2}
f(x^k + \lambda_k(p^{k} - x^k)) \leq f(x^k) - |{\omega}_{k}|\beta_k + \frac{2^{j_k}L_k}{2} \|p^k - x^k\|^2 \beta_k^2.
\end{equation}
In addition, the sequence $\{x^k\}_{k\in\mathbb{N}}$ generated by the Frank--Wolfe algorithm with Lipschitz-based adaptive stepsize is well defined. Furthermore, the following inequality holds for all $k \geq 0$:
\[
f(x^{k+1}) \leq f(x^k) - \frac{1}{2} |{\omega}_{k}| \lambda_k.
\]
In particular, the sequence $\{L_k\}_{k\in\mathbb{N}}$ satisfies the bounds
$L_0 \leq L_k \leq L + L_0,$ for all $ k \in \mathbb{N}$, and the stepsize sequence $\{\lambda_k\}_{k\in\mathbb{N}}$ satisfies
\begin{equation} \label{eq:TestLineSeachC2b}
\lambda_k \geq \min\left\{1, \frac{|{\omega}_{k}|}{\alpha} \right\}, \quad \forall\, k \in \mathbb{N}.
\end{equation}
\end{proposition}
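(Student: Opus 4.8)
The plan is to base the entire argument on the Descent Lemma (Proposition~\ref{pr:DescentLemma}), which furnishes the one-step quadratic upper bound that the backtracking test \eqref{deq:tests} is built to reproduce. Writing $d^k := p^k - x^k$ and recalling that $\nabla f(x^k)^{\top}d^k = \omega(x^k) = -|\omega(x^k)|$ (using $\omega(x^k)\le 0$ from Proposition~\ref{prop:omega}), I would first show that the test holds as soon as $2^{j}L_k \ge L$. Indeed, Proposition~\ref{pr:DescentLemma} applied with $v = d^k$ and stepsize $\lambda_j$ gives $f(x^k+\lambda_j d^k)\le f(x^k)-|\omega(x^k)|\lambda_j+\tfrac{L}{2}\|d^k\|^2\lambda_j^2$, and whenever $L\le 2^{j}L_k$ the quadratic term is dominated by $\tfrac{2^{j}L_k}{2}\|d^k\|^2\lambda_j^2$, which is exactly \eqref{deq:tests}. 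Since $2^{j}\to\infty$, this criterion is reached after finitely many increments, so $j_k$ is finite and well defined; combining this with the explicitly imposed requirement $2^{j_k}L_k\ge 2L_0$ identifies $j_k$ as the smallest integer meeting both conditions.

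Next I would establish $L_0\le L_k\le L+L_0$ by induction on $k$ through the update $L_{k+1}=2^{j_k-1}L_k$. The lower bound is immediate from the condition $2^{j_k}L_k\ge 2L_0$, which yields $L_{k+1}=2^{j_k-1}L_k\ge L_0$. For the upper bound, minimality of $j_k$ implies that when $j_k\ge 1$ the trial index $j_k-1$ failed the descent test; by the first step this forces $2^{j_k-1}L_k< L$, hence $L_{k+1}<L\le L+L_0$, while in the remaining case $j_k=0$ one has $L_{k+1}=L_k/2$ and the inductive hypothesis closes the bound.

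I would then derive the descent inequality $f(x^{k+1})\le f(x^k)-\tfrac12|\omega(x^k)|\lambda_k$ by substituting the minimizer $\lambda_k=\min\{1,\,|\omega(x^k)|/(2^{j_k}L_k\|d^k\|^2)\}$ into \eqref{eq:TestLineSeachC2}. Splitting into the two branches of the minimum, when the minimizer is interior the identity $2^{j_k}L_k\|d^k\|^2\lambda_k=|\omega(x^k)|$ collapses the right-hand side exactly to $f(x^k)-\tfrac12|\omega(x^k)|\lambda_k$, whereas when $\lambda_k=1$ the inequality $2^{j_k}L_k\|d^k\|^2\le|\omega(x^k)|$ bounds the quadratic term by $\tfrac12|\omega(x^k)|$, giving the same conclusion. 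Finally, using $2^{j_k}L_k=2L_{k+1}\le 2(L+L_0)$ together with $\|d^k\|\le\diam(\mathcal{C})$, I would bound $2^{j_k}L_k\|d^k\|^2\le\alpha$, so the second argument of the minimum is at least $|\omega(x^k)|/\alpha$, which is precisely \eqref{eq:TestLineSeachC2b}.

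The main obstacle I anticipate is the bookkeeping on the sequence $\{L_k\}$: reconciling the two termination conditions that define $j_k$, handling the edge case $j_k=0$ without breaking the lower bound, and threading the constants so that the upper bound $L_k\le L+L_0$ feeds cleanly into the $\alpha$-estimate for the stepsize. Everything else reduces to the two-case analysis of the $\min$ and straightforward algebra around the Descent Lemma.
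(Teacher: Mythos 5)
The paper itself never proves Proposition~\ref{dpr:mainDf}; it omits the proof, pointing to \cite{BeckTeboulle2004, pedregosa2020linearly}. So your attempt can only be judged on its own merits. Most of it is sound: the Descent Lemma argument showing that \eqref{deq:tests} holds whenever $2^{j}L_k\geq L$ (hence $j_k$ is finite), the two-branch analysis of the minimizer yielding $f(x^{k+1})\leq f(x^k)-\tfrac12|\omega_k|\lambda_k$, and the derivation of \eqref{eq:TestLineSeachC2b} from $2^{j_k}L_k=2L_{k+1}\leq 2(L+L_0)$ and $\|p^k-x^k\|\leq\diam(\mathcal{C})$ are all correct.

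The genuine gap is in the induction for $L_0\leq L_k\leq L+L_0$, and it stems from an inconsistency in what you take $j_k$ to be. For the lower bound you treat $2^{j_k}L_k\geq 2L_0$ as part of the termination rule of the backtracking loop --- and it must be, since under the algorithm as literally written (Step 3.2 checks only \eqref{deq:tests}) the bound $L_k\geq L_0$ is false: for linear $f$ the test passes at $j=0$ at every iteration, so $L_k=2^{-k}L_0\to 0$. But for the upper bound you silently revert to the other reading: you assert that minimality of $j_k\geq 1$ forces the \emph{descent test} to fail at $j_k-1$, hence $2^{j_k-1}L_k<L$. Under the two-condition reading, $j_k-1$ may instead pass the descent test and fail only the requirement $2^{j_k-1}L_k\geq 2L_0$; in that case all you obtain is $L_{k+1}=2^{j_k-1}L_k<2L_0$, which is not $\leq L+L_0$ whenever $L_0>L$, and your induction does not close. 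The repair is short but must be stated: in that case the inductive hypothesis $L_k\geq L_0$ gives $2^{j_k-1}L_0\leq 2^{j_k-1}L_k<2L_0$, hence $2^{j_k-1}<2$, i.e.\ $j_k=1$, and then $L_{k+1}=L_k\leq L+L_0$ by the inductive hypothesis. A second, smaller omission: \eqref{eq:TestLineSeachC2} is asserted for \emph{every} $\beta_k\in(0,1]$ --- and this is exactly the form used in the proof of Theorem~\ref{th:fcr} --- whereas you only ever invoke it at $\beta_k=\lambda_k$. It follows in one line from \eqref{deq:tests}, because \eqref{deq:fixed.steps} says $\lambda_k$ minimizes $\lambda\mapsto -|\omega_k|\lambda+\tfrac{2^{j_k}L_k}{2}\|p^k-x^k\|^2\lambda^2$ over $(0,1]$, but that line needs to appear.
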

When the Lipschitz constant is known, we can recall Remark \ref{re:lips} and the following result is consequence of Proposition \ref{dpr:mainDf}.
\begin{lemma}\label{l:icb}
Let  $\{x^{k}\}_{k\in\mathbb{N}}$ be the sequence generated by the FW algorithm using either the Lipschitz-based or the diminishing stepsize rule. Then, for all $k \in \mathbb{N}$,
\begin{equation}\label{brasil}
f(x^k+ \lambda_k(p(x^k)-x^k)) \leq  f(x^k)-|\omega(x^k)|\, \beta_{k} + \frac{L}{2}\|p(x^k) -x^k\|^2\, \beta_k^2, 
\end{equation}
where $\beta_k \in (0,1]$, particularly $\beta_k=2/(k+2)$, which is common in the literature.
\end{lemma}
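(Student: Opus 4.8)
The plan is to reduce both stepsize rules to the single descent inequality furnished by Proposition~\ref{pr:DescentLemma}. Writing $d^{k}:=p(x^{k})-x^{k}$ and recalling from Proposition~\ref{prop:omega} that $\omega(x^{k})\le 0$, so that $\nabla f(x^{k})^{\T}d^{k}=\omega(x^{k})=-|\omega(x^{k})|$, I would apply Proposition~\ref{pr:DescentLemma} with $x=x^{k}$, $v=d^{k}$, and an arbitrary $\lambda\in[0,1]$ to obtain
\[
   f(x^{k}+\lambda d^{k})\;\le\;f(x^{k})-|\omega(x^{k})|\,\lambda+\frac{L}{2}\|d^{k}\|^{2}\lambda^{2}.
\]
This inequality is the backbone of the whole argument; the two stepsize rules differ only in how the accepted stepsize $\lambda_{k}$ relates to the test parameter $\beta_{k}$ appearing in \eqref{brasil}.

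For the diminishing rule the conclusion is immediate: since $\lambda_{k}=\beta_{k}=2/(k+2)\in(0,1]$, substituting $\lambda=\beta_{k}$ into the backbone inequality gives precisely \eqref{brasil}. For the Lipschitz-based rule I would exploit the variational characterization of the stepsize recalled in Remark~\ref{re:lips}: by \eqref{eq:adaptive}, $\lambda_{k}$ is the minimizer over $(0,1]$ of the one-dimensional quadratic model $q(\lambda):=-|\omega(x^{k})|\,\lambda+\tfrac{L}{2}\|d^{k}\|^{2}\lambda^{2}$. Minimality yields $q(\lambda_{k})\le q(\beta_{k})$ for every $\beta_{k}\in(0,1]$, and chaining the backbone inequality evaluated at $\lambda=\lambda_{k}$ with this comparison produces
\[
   f(x^{k}+\lambda_{k}d^{k})\;\le\;f(x^{k})+q(\lambda_{k})\;\le\;f(x^{k})+q(\beta_{k}),
\]
which is exactly \eqref{brasil}.

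The only point requiring care—and it is minor—is justifying the passage from $\lambda_{k}$ to an arbitrary $\beta_{k}$ on the full admissible interval $(0,1]$. This rests on the fact that $\lambda_{k}$ is the \emph{constrained global} minimizer of $q$ over $(0,1]$ rather than merely a critical point; the clipping to $1$ in the definition of the Lipschitz-based stepsize is precisely what guarantees this. Once this observation is in place, no further estimates are needed, and the stated inequality holds uniformly in $k$ for both rules. Since the argument is essentially identical to the ones in \cite{BeckTeboulle2004, pedregosa2020linearly}, the details can be kept brief.
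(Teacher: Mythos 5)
Your proposal is correct. The two cases are handled properly: for the diminishing rule the accepted stepsize coincides with $\beta_k=2/(k+2)$, so the descent lemma (Proposition~\ref{pr:DescentLemma}) applied with $\lambda=\beta_k$ gives \eqref{brasil} at once; for the Lipschitz-based rule you correctly combine the descent lemma at $\lambda=\lambda_k$ with the fact that $\lambda_k$ is the \emph{constrained global} minimizer over $(0,1]$ of the quadratic model $q(\lambda)=-|\omega(x^k)|\lambda+\tfrac{L}{2}\|d^k\|^2\lambda^2$ (this is exactly the content of \eqref{eq:adaptive} in Remark~\ref{re:lips}), so that $q(\lambda_k)\le q(\beta_k)$ for every $\beta_k\in(0,1]$. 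This also resolves, in the right way, the ambiguity in the lemma's statement: the inequality holds for \emph{all} $\beta_k\in(0,1]$ under the Lipschitz-based rule, but only for $\beta_k=\lambda_k$ under the diminishing rule, which is all that Theorem~\ref{th:fcr} needs.

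Your route differs from the paper's in presentation rather than substance. The paper gives no direct argument: it obtains the lemma by specializing Proposition~\ref{dpr:mainDf} (the adaptive-stepsize analysis, whose proof is itself omitted with a citation to the literature) through Remark~\ref{re:lips}, taking $L_0=L$ and $j_k=0$. Your proof is self-contained and more elementary, resting only on Proposition~\ref{pr:DescentLemma} and the variational characterization of $\lambda_k$; it also explicitly covers the diminishing rule, which the paper's citation route does not literally address, since Proposition~\ref{dpr:mainDf} concerns only the adaptive stepsize. The one detail worth adding to your write-up is the (standard) observation that $x^k+\lambda d^k=(1-\lambda)x^k+\lambda p(x^k)\in\mathcal{C}$ for $\lambda\in[0,1]$ by convexity of $\mathcal{C}$, which is the hypothesis needed to invoke Proposition~\ref{pr:DescentLemma}.
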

We now present a complexity bound for the FW algorithm applied to star-convex functions over compact convex sets, using a Lipschitz-based adaptive stepsize rule. The result guarantees sublinear rates for both the function value suboptimality and the optimality measure.
\begin{theorem} \label{th:fcr}
Assume that \( f:\mathbb{R}^n \to \mathbb{R} \) is star-convex on the compact convex set \(\mathcal{C}\), and that  $\{x^{k}\}_{k\in\mathbb{N}}$ is a  sequence generated by the  FW algorithm using either the Lipschitz-based or the diminishing stepsizes. Then,
\begin{item}
\item[(i)] 	$f(x^k)-f^*  \leq \displaystyle   \frac{4(L+L_0)\diam({\cal C})^2}{k}$, for all $k=1, 2, \ldots.$
\item[(ii)] $\displaystyle \min_{\ell \in \left\{ \lfloor \frac{k}{2}\rfloor+2,...,k \right\} } |{\omega}_{\ell}| \leq \frac{16(L+L_0)\diam({\cal C})^2}{k-2}$, for all $k=3, 4, \ldots,$ where $\lfloor k/2 \rfloor= \max_{n\in \mathbb{N}} \left\{ n\leq k/2\right\}.$ 
\end{item}
 \end{theorem}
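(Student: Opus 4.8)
The plan is to reduce both stepsize rules to the abstract recursion of Lemma~\ref{lemma taxa} and then read off items (i) and (ii) directly. Define the nonnegative sequences $a_k := f(x^k)-f^*$ and $b_k := |\omega(x^k)|$, put $\beta_k := 2/(k+2)$, and set the constant $A := 2(L+L_0)\diam(\mathcal{C})^2$. With these choices, items (i) and (ii) of Lemma~\ref{lemma taxa} read $a_k \le 2A/k = 4(L+L_0)\diam(\mathcal{C})^2/k$ and $\min_{\ell}b_\ell \le 8A/(k-2) = 16(L+L_0)\diam(\mathcal{C})^2/(k-2)$, which are precisely the asserted bounds. It therefore suffices to verify the two hypotheses of that lemma, namely the quadratic descent recursion $a_{k+1}\le a_k - b_k\beta_k + \tfrac{A}{2}\beta_k^2$ and the comparison $a_k \le b_k$.

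To obtain the recursion, I would start from the per-iteration descent inequality \eqref{brasil} of Lemma~\ref{l:icb}, valid for both the Lipschitz-based and the diminishing rules at the admissible value $\beta_k = 2/(k+2)$. Bounding $\|p(x^k)-x^k\| \le \diam(\mathcal{C})$ and using $\tfrac{L}{2}\le L+L_0$ gives $\tfrac{L}{2}\|p(x^k)-x^k\|^2 \le (L+L_0)\diam(\mathcal{C})^2 = \tfrac{A}{2}$, so that \eqref{brasil} becomes $f(x^{k+1}) \le f(x^k) - b_k\beta_k + \tfrac{A}{2}\beta_k^2$, i.e.\ the desired recursion for $a_k$. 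For the Lipschitz-based adaptive variant the same recursion follows instead from the acceptance inequality \eqref{eq:TestLineSeachC2} of Proposition~\ref{dpr:mainDf} evaluated at $\beta_k=2/(k+2)$, after replacing the coefficient $\tfrac{2^{j_k}L_k}{2}\|p^k-x^k\|^2$ by $\tfrac{A}{2}$ via the bound $2^{j_k}L_k = 2L_{k+1}\le 2(L+L_0)$.

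The comparison $a_k \le b_k$ is where star-convexity enters. Applying Proposition~\ref{pr:pscf} at $x^k$ gives $f^*-f(x^k)\ge \nabla f(x^k)^{\T}(x^*-x^k)$ for a global minimizer $x^*\in X^*$; since $p(x^k)$ minimizes $u\mapsto \nabla f(x^k)^{\T}(u-x^k)$ over $\mathcal{C}$ and $x^*\in\mathcal{C}$, the right-hand side is at least $\nabla f(x^k)^{\T}(p(x^k)-x^k)=\omega(x^k)=-|\omega(x^k)|$, where $\omega(x^k)\le 0$ by Proposition~\ref{prop:omega}. Hence $f(x^k)-f^* \le |\omega(x^k)|$, that is $a_k \le b_k$.

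With both hypotheses verified, the result follows by invoking Lemma~\ref{lemma taxa} with $A=2(L+L_0)\diam(\mathcal{C})^2$. I expect the only delicate point to be the first step: presenting a single recursion that simultaneously accommodates the stepsize rules, which forces the common constant $A$ to absorb the worst-case curvature estimate. For the diminishing and Lipschitz-based rules this is immediate from Lemma~\ref{l:icb}, while the adaptive case relies on the uniform bound $L_k\le L+L_0$ and the geometric update $L_{k+1}=2^{j_k-1}L_k$ from Proposition~\ref{dpr:mainDf}; everything else is a routine substitution.
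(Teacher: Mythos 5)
Your proposal is correct and follows essentially the same route as the paper's own proof: establish the recursion $a_{k+1}\le a_k-b_k\beta_k+\tfrac{A}{2}\beta_k^2$ from the per-iteration descent inequality (via \eqref{eq:TestLineSeachC2}/\eqref{brasil}, the bound $2^{j_k}L_k=2L_{k+1}\le 2(L+L_0)$, and $\|p(x^k)-x^k\|\le\diam(\mathcal{C})$), use star-convexity through Proposition~\ref{pr:pscf} to get $a_k\le b_k$, and invoke Lemma~\ref{lemma taxa} with $A=2(L+L_0)\diam(\mathcal{C})^2$. If anything, your bookkeeping of the constant (matching $\tfrac{A}{2}\beta_k^2$ in the lemma rather than the paper's slightly loose $\alpha\beta_k^2$) is cleaner than the published argument.
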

\begin{proof}
 It follows from  \eqref {eq:TestLineSeachC2} in  Proposition~\ref{dpr:mainDf}   that 
\begin{equation}  \label{eq;ascfa}
 f(x^k+ \lambda_{k}(p^{k}-x^k)) \leq f(x^k) -|{\omega}(x^k)|\lambda_{k}+\frac{{2^{j_k}L_k} }{2} \|p^{k} -x^k\|^2\lambda_{k}^2. 
\end{equation}
On the other hand, by using \eqref{deq:fixed.steps} we conclude that  
\begin{equation*}
\lambda_{k}={\argmin}_{\lambda \in (0,1]}\left \{-|{\omega}(x^k)| \lambda+\frac{{2^{j_k}L_k} }{2} \|p^{k} -x^k\|^2 \lambda^2 \right \}.
\end{equation*}
Hence, takinging  $\beta_k\in (0, 1]$,  it follows from  \eqref{eq;ascfa} and the last inequality that 
  \begin{equation*} 
  f(x^k+ \lambda_{k}(p^{k}-x^k)) \leq f(x^k) -|{\omega_k}| \beta_k + \frac{{2^{j_k}L_k}}{2}\|p^{k}-x^k\|^2\beta_k^{2}.
 \end{equation*}
Since $ \|p(x^k) -x^k\| \leq \diam({\cal C})$, the last inequality together with  \eqref{deq:iterations} and  inequality $ L_k \leq L + L_0,$ in Proposition~\eqref{dpr:mainDf} yield 
  \begin{equation}  \label{eq:fiicb}
  f(x^{k+1})-f^* \leq f(x^k)-f^* -|{\omega_k}| \beta_k + (L+L_0)\diam({\cal C})^2\beta_k^{2}.
 \end{equation}
 Taking into account that $f$ is   a  star-convex function in ${{\cal C}}$, it follows from Proposition~\ref{pr:pscf} that for  $x^*$  a global minimizer  we have  
 $$
 f^*-f(x^k) \geq \nabla f(x^k)^{\T}(x^* - x^k).
 $$
 Thus,  it follows from \eqref{eq: opsv}  that   $0\geq f^*-f(x^k)  \geq {\omega}(x_{k})$, which implies that  $0\leq f(x^k) - f^*\leq  |{\omega_k}|$.
Thus,  setting 
$$
a_{k}:= f(x^k)-f^*\leq b_{k}:=|{\omega_k}|, \qquad \alpha:=2(L+L_0)\diam({\cal C})^2, 
$$
we obtain from  using \eqref{eq:fiicb}   that 
$
a_{k+1} \leq a_k - b_k \beta_k +  \alpha \beta_k^2.
$
  applying   Lemma \ref{lemma taxa} w gives the desired inequalities. 
\end{proof}
According to Theorem~\ref{th:fcr}, functions with the star-convexity property allow the Frank-Wolfe algorithm to efficiently minimize them even when their landscape is not convex.
%%%%%%%%%%%%%%%%%%%%%%%%%%%%%%%%%%
\section{Conclusions} \label{sec:conclusions}

We analyzed the iteration-complexity properties of the Frank--Wolfe algorithm for optimization problems with star-convexity objective functions. Under this generalized convexity assumption, we proved that the algorithm achieves an ${\cal O}(1/k)$ convergence rate for both the objective function values and the duality gap—matching the classical bounds known for convex objectives and confirming the method’s robustness in broader settings. A central aspect of our analysis is the choice of stepsize: we examined both a predefined diminishing rule and an adaptive strategy based on Lipschitz estimates. The adaptive rule, in particular, enables explicit stepsize computation without backtracking, while preserving worst-case guarantees. These findings reinforce the practical relevance of Frank--Wolfe methods in large-scale or structured problems, where projection steps are computationally costly. Overall, our results advance the theoretical understanding of projection-free algorithms under relaxed convexity assumptions and point to new opportunities for their application in nonstandard optimization scenarios.

\section*{Funding}
The first and third authors were supported by the Australian Research Council (ARC), Solving hard Chebyshev approximation problems through nonsmooth analysis (Discovery Project DP180100602). The second author was supported in part by CNPq - Brazil Grants 304666/2021-1. This work was done, in part, while the second author visited the first and third authors at Deakin University in November 2022. The second author thanks the host institution for funding the visit and for the pleasant scientific atmosphere it provided during his visit.

\bibliographystyle{habbrv}
\bibliography{WeakStarConvex}

\end{document}